\newcounter{maint}
\numberwithin{equation}{section}
\begin{document}

\newtheorem{theorem}{Theorem}[section]

\newtheorem{lemma}[theorem]{Lemma}

\newtheorem{corollary}[theorem]{Corollary}
\newtheorem{proposition}[theorem]{Proposition}

\theoremstyle{remark}
\newtheorem{remark}[theorem]{Remark}

\theoremstyle{definition}
\newtheorem{definition}[theorem]{Definition}

\theoremstyle{definition}
\newtheorem{conjecture}[theorem]{Conjecture}

\newtheorem{example}[theorem]{Example}
\newtheorem{problem}[theorem]{Problem}


\def\k{\Bbbk}
\def\I{\mathbb{I}}
\def\ufo{\mathfrak{ufo}}
\newcommand{\Dchaintwo}[4]{
\rule[-3\unitlength]{0pt}{8\unitlength}
\begin{picture}(14,5)(0,3)
\put(2,4){\ifthenelse{\equal{#1}{l}}{\circle*{4}}{\circle{4}}}
\put(4,4){\line(2,0){20}}
\put(26,4){\ifthenelse{\equal{#1}{r}}{\circle*{4}}{\circle{4}}}
\put(2,10){\makebox[0pt]{\scriptsize #2}}
\put(14,8){\makebox[0pt]{\scriptsize #3}}
\put(26,10){\makebox[0pt]{\scriptsize #4}}
\end{picture}}
\title[Automorphism group of  Suzuki's Hopf algebra]
{Automorphism group of   Suzuki's  Hopf algebra}

\author[Y. Shi, N. Hu]{Yuxing Shi and Naihong Hu$^\star$}

\address{School of Mathematics and Statistics, Jiangxi Normal University, Nanchang, Jiangxi 330022, People's Republic of China}
\email{yxshi@jxnu.edu.cn}
\address{School of Mathematical Sciences, MOE Key Laboratory of Mathematics and Engineering Applications \& Shanghai Key Laboratory of PMMP, East China Normal University, Shanghai 200241, China}
\email{nhhu@math.ecnu.edu.cn}
\thanks{$^\star$ Corresponding author.}

\makeatletter
\@namedef{subjclassname@2020}{\textup{2020} Mathematics Subject Classification}
\makeatother
\subjclass[2020]{16T05, 16T25, 16D60}
\thanks{
\textit{Keywords:} Automorphism group; Hopf algebra;  Yetter-Drinfeld module.
}

\begin{abstract}
In this paper, we calculate explicitly  automorphism group of  the Suzuki's Hopf algebra
$A_{Nn}^{\mu\lambda}$ by viewing Yetter-Drinfeld modules
as invariants of  Hopf algebra automorphisms.
\end{abstract}
\maketitle

\section{Introduction}
In general, automorphism groups of algebras are difficult to determine. In fact, some algebras contain wild automorphisms, see \cite{Joseph1976, Shestakov2004}.
This leads to a question: what invariants of an algebra control its automorphism group?
Yakimov  \cite{Yakimov2014} proved the Andruskiewitsch-Dumas conjecture \cite{Andruskiewitsch2008} which concerns
the automorphism groups of the quantum nilpotent algebras $\mathcal{U}_q^+(\mathfrak{g})$
for all simple Lie algebras $\mathfrak{g}$. His proof exhibits a general classification method for
automorphism groups of related algebras such as quantum cluster algebras, algebras defined by iterated
Ore extensions \cite{Goodearl2000}, and so on. The key strategy is that
$\mathrm{Aut}\left(\mathcal{U}_q^+(\mathfrak{g})\right)$ is controlled by the group of certain continuous bi-finite automorphisms of completed quantum tori. Ceken et al.
developed the discriminant method to determine
 automorphism groups of certain noncommutative algebras such as quantum Weyl groups and so on, see \cite{Ceken2015, Ceken2016, Chan2018}.  Agore and Militaru consider the invariant that controls the automorphism group of a given
(finite dimensional) algebra $A$ (associative, Lie, Poisson, etc) is the universal bialgebra
that coacts on $A$, see \cite{Agore2024, Agore2020, Militaru2022}.
Other related works, please refer to \cite{Li2017, Chen2013}.

 As for the Hopf algebra automorphisms of pointed Hopf algebras,
 Panaite et al. \cite[Lemma 1]{Panaite1999}
 and  Shilin Yang \cite{Yang2007} made use of the fact that the spaces of group-like and skew primitive
 elements are invariants of Hopf algebra automorphisms. Musson determined the Hopf automorphism group of  the the quantized enveloping algebras $U_q(\mathfrak{g})$ defined over $\Bbb{Q}(q)$
 with $q$ transcendental and the Lie algebra $\mathfrak{g}$ semisimple, according to
 its coradical filtration \cite{Chin1996}. Similar results, please refer to
 \cite{Braverman1994, Twietmeyer1992}.

Radford proved that the group ${\rm Aut}_{\rm Hopf}(H)$ of Hopf algebra $H$ is finite, if
$H$ is a semisimple Hopf algebra over a field of characteristic $0$, or that $H$ is semisimple cosemisimple
involutory Hopf algebra over a field of characteristic $p>\dim H$ \cite{Radford1990}.
In this paper, we view  Yetter-Drinfeld modules as invariants of  Hopf algebra automorphisms.
Let $H$ be a Hopf algebra.
Given any finite-dimensional  Yetter-Drinfeld module
over $H$ and any  Hopf algebra automorphisms $\psi$ of $H$,
we can build a Yetter-Drinfeld module $V^{\psi}$.   We  have $\dim V=\dim V^{\psi}$
and ${\rm Supp} (V^\psi)=\psi^{-1}({\rm Supp}(V))$, see Lemma \ref{KeyLemma}.
So it could be  efficient
to calculate $\mathrm{Aut}_{\mathrm{Hopf}}(H)$, provided a classification of simple Yetter-Drinfeld modules over $H$.

The paper is organized as follows.
In Section 2, we present necessary knowledges about  Suzuki's Hopf algebra.
In Section 3, all Hopf algebra automorphisms of  Suzuki's Hopf algebra are calculated.

\section{The Suzuki Hopf algebras $A_{Nn}^{\mu\lambda}$}
Let $\k$ be  an algebraicaly  closed field of characteristic $0$.  Since our results in the paper rely on
the classification of simple Yetter-Drinfeld modules over
 Suzuki's Hopf algebra \cite{Shi2020even, Shi2020odd}, which are based on  an algebraicaly  closed field of characteristic $0$.

Suzuki introduced a family of cosemisimple Hopf algebras $A_{Nn}^{\mu\lambda}$ parametrized by integers $N\geq 1$, $n\geq 2$ and $\mu$, $\lambda=\pm 1$, and investigated various properties and structures of them \cite{Suzuki1998}.
Wakui studied  Suzuki's  Hopf algebra  from different perspectives
\cite{Wakui2010a, Wakui2019, Wakui2003}.
The first author studied the Nichols algebras of simple Yetter-Drinfeld modules over
$A_{Nn}^{\mu\lambda}$, see \cite{Shi2020even, Shi2020odd, Shi2024Suzuki}.
Suzuki's Hopf algebra  $A_{Nn}^{\mu\lambda}$ is generated by $x_{11}$, $x_{12}$, $x_{21}$,
$x_{22}$ subject to the relations:
\begin{align*}
&x_{11}^2=x_{22}^2,\quad x_{12}^2=x_{21}^2,\quad \chi _{21}^n=\lambda\chi _{12}^n,
\quad \chi _{11}^n=\chi _{22}^n,\\
&x_{11}^{2N}+\mu x_{12}^{2N}=1,\quad
x_{ij}x_{kl}=0\,\, \text{whenever $i+j+k+l$ is odd},
\end{align*}
where $\chi _{11}^m$, $\chi _{12}^m$, $\chi _{21}^m$ and $\chi _{22}^m$ are
defined as follows for  $m\in \Bbb Z^+$:
$$\chi _{11}^m:=\overbrace{x_{11}x_{22}x_{11}\ldots\ldots }^{\textrm{$m$ }},\quad \chi _{22}^m:=\overbrace{x_{22}x_{11}x_{22}\ldots\ldots }^{\textrm{$m$ }},$$
$$\chi _{12}^m:=\overbrace{x_{12}x_{21}x_{12}\ldots\ldots }^{\textrm{$m$ }},\quad \chi _{21}^m:=\overbrace{x_{21}x_{12}x_{21}\ldots\ldots }^{\textrm{$m$ }}.$$
The comultiplication, counit and antipode  of $A_{Nn}^{\mu\lambda}$ are given by
\begin{equation}\label{eq5.3}
\Delta (\chi_{ij}^k)=\chi_{i1}^k\otimes \chi_{1j}^k+\chi_{i2}^k\otimes \chi_{2j}^k,\quad
\varepsilon(x_{ij})=\delta_{ij}, \quad S(x_{ij})=x_{ji}^{4N-1},
\end{equation}
for $k\geq 1$, $i,j=1,2$. Let $\overline{i,i+j}=\{i,i+1,i+2,\cdots,i+j\}$ be  an index set.
Then the  basis of $A_{Nn}^{\mu\lambda}$ can be represented by
\begin{equation}\label{eq5.2}
\left\{x_{11}^s\chi _{22}^t,\ x_{12}^s\chi _{21}^t \mid 
s\in\overline{1,2N}, t\in\overline{0,n-1}
\right\}.
\end{equation}
Thus for $s,t\geq 0$ with $s+t\geq 1$,
\begin{align*}
\Delta (x_{11}^s\chi _{22}^t)
&=x_{11}^s\chi _{22}^t\otimes x_{11}^s\chi _{22}^t
+x_{12}^s\chi _{21}^t\otimes x_{21}^s\chi _{12}^t,\\
\Delta (x_{12}^s\chi _{21}^t)
&=x_{11}^s\chi _{22}^t\otimes x_{12}^s\chi _{21}^t
+x_{12}^s\chi _{21}^t\otimes x_{22}^s\chi _{11}^t.
\end{align*}

The cosemisimple Hopf algebra $A_{Nn}^{\mu\lambda}$ is decomposed to the direct sum of simple subcoalgebras such as
\[A_{Nn}^{\mu\lambda}
=\bigoplus_{g\in G}\k g\oplus\bigoplus_{
\substack{s\in\overline{1,N},\,\, t\in\overline{1,n-1}}}C_{st},
\]
see \cite[Theorem 3.1]{Suzuki1998} and \cite[Proposition 5.5]{Wakui2010a},
where
\begin{align*}
G&=\left\{x_{11}^{2s}\pm x_{12}^{2s}, x_{11}^{2s+1}\chi_{22}^{n-1}\pm
        \sqrt{\lambda}x_{12}^{2s+1}\chi_{21}^{n-1}\mid s\in\overline{1,N}\right\},\\
C_{st}&=\k x_{11}^{2s}\chi_{11}^t+\k x_{12}^{2s}\chi_{12}^t+
              \k x_{11}^{2s}\chi_{22}^t+\k x_{12}^{2s}\chi_{21}^t,\quad
              s\in\overline{1,N}, t\in\overline{1,n-1}. 
\end{align*}
The set $\left\{\k g\mid g\in G\right\}\cup \left\{\k x_{11}^{2s}\chi_{11}^t
+\k x_{12}^{2s}\chi_{21}^t \mid s\in\overline{1,N},
t\in\overline{1,n-1}
\right\}$
is a full set of non-isomorphic simple left $A_{Nn}^{\mu\lambda}$-comodules, where
the coactions of the comodules listed above are given by the coproduct $\Delta$. Denote
the comodule $\k x_{11}^{2s}\chi_{11}^t
+\k x_{12}^{2s}\chi_{21}^t $ by $\Lambda_{st}$. That is to say the comodule
$\Lambda_{st}=\k w_1+\k w_2$
is defined as
\begin{align*}
\rho\left(w_1\right)
=  x_{11}^{2s}\chi_{11}^t\otimes w_1
     +x_{12}^{2s}\chi_{12}^t\otimes w_2,\quad
\rho\left(w_2\right)
=  x_{11}^{2s}\chi_{22}^t\otimes w_2
     +x_{12}^{2s}\chi_{21}^t\otimes w_1 .
\end{align*}
We define the support of $\Lambda_{st}$ as
\begin{align}
{\rm Supp}(\Lambda_{st})=\k x_{11}^{2s}\chi_{11}^t+\k x_{12}^{2s}\chi_{12}^t+
              \k x_{11}^{2s}\chi_{22}^t+\k x_{12}^{2s}\chi_{21}^t=C_{st}.
\end{align}

\section{Automorphism group of  Suzuki's  Hopf algebra}
\begin{lemma}\cite[Lemma 6.1]{MR1780094}\label{KeyLemma}
Let $H$ be a Hopf algebra, $\psi: H\rightarrow H$ an automorphism of Hopf algebras, $V$, $W$ Yetter-Drinfeld modules over $H$.
Let $V^\psi$ be the same space underlying $V$ but with action and coaction
$$h\cdot_\psi v=\psi(h)\cdot v,\quad \rho^\psi (v)
=\left(\psi^{-1}\otimes \mathrm{id}\right)\rho(v), \quad h\in H, v\in V.$$
Then $V^\psi$ is also a Yetter-Drinfeld module over $H$.
If $T: V\rightarrow W$ is a morphism in ${}_H^H\mathcal{YD}$, then
$T^\psi: V^\psi\rightarrow W^\psi$ also is. Moreover, the braiding
$c: V^\psi\otimes  W^\psi\rightarrow    W^\psi \otimes V^\psi$
coincides with the braiding $c: V\otimes W\rightarrow W\otimes V$.
\end{lemma}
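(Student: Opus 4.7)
The plan is to check each assertion by direct calculation, using only that $\psi$ and $\psi^{-1}$ are both Hopf algebra automorphisms, hence algebra maps, coalgebra maps, and commute with the antipode $S$.

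First I would verify the module and comodule axioms for $V^\psi$ separately. The identities $(hh')\cdot_\psi v = h\cdot_\psi (h'\cdot_\psi v)$ and $1\cdot_\psi v = v$ follow at once from $\psi$ being an algebra map. Dually, the coassociativity and counitality of $\rho^\psi$ reduce to $\Delta\circ\psi^{-1}=(\psi^{-1}\otimes\psi^{-1})\circ\Delta$ and $\varepsilon\circ\psi^{-1}=\varepsilon$, both of which hold because $\psi^{-1}$ is a coalgebra automorphism.

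The substantive step is the Yetter-Drinfeld compatibility
\[
\rho^\psi(h\cdot_\psi v)=h_{(1)}\,\psi^{-1}(v_{(-1)})\,S(h_{(3)})\otimes h_{(2)}\cdot_\psi v_{(0)}.
\]
I would unfold the left side to $(\psi^{-1}\otimes\mathrm{id})\rho(\psi(h)\cdot v)$, apply the YD compatibility for $V$, and then simplify using three ingredients: (i) $\psi$ is a coalgebra map, so $\psi(h)_{(i)}=\psi(h_{(i)})$ in Sweedler notation; (ii) $\psi$ intertwines the antipode, so $S\circ\psi=\psi\circ S$; and (iii) $\psi^{-1}$ is an algebra map, so it distributes across the product appearing in the first tensor factor. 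After these substitutions the expression collapses to the required right-hand side.

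The morphism claim for $T^\psi$ is transport of structure: $T(h\cdot_\psi v)=T(\psi(h)\cdot v)=\psi(h)\cdot T(v)=h\cdot_\psi T(v)$, and the colinearity follows analogously after applying $\psi^{-1}\otimes T$ to the original colinearity identity for $T$. Finally, the braiding coincidence is a single substitution:
\[
c^\psi(v\otimes w)=\psi^{-1}(v_{(-1)})\cdot_\psi w\otimes v_{(0)}=\psi(\psi^{-1}(v_{(-1)}))\cdot w\otimes v_{(0)}=c(v\otimes w).
\]
I expect the YD compatibility to be the only step requiring real bookkeeping, because three copies of $h$ and one of $v_{(-1)}$ must be threaded simultaneously through $\psi$ and $\psi^{-1}$; every other assertion amounts to a one-line application of a single axiom of $\psi$.
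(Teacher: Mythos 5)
Your verification is correct and is the standard direct check; the paper itself gives no argument, merely citing \cite[Lemma 6.1]{MR1780094}, and your computation (using that $\psi$ is a coalgebra map, commutes with $S$, and that $\psi^{-1}$ is an algebra map) is exactly how that cited lemma is proved. Nothing further is needed.
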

\begin{remark}
We have ${\rm Supp} (V^\psi)=\psi^{-1}({\rm Supp}(V))$.
\end{remark}

\begin{lemma}
Let  $n=2m+1$ be odd and $\psi$ be any  Hopf algebra automorphism of  $A_{Nn}^{\mu\lambda}$, then
there exist some $s\in\overline{1,N}$ and  $t\in\overline{1,n-1}$ such that $\psi(C_{N1})=C_{st}$.
\end{lemma}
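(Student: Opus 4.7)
My plan is to exploit that $\psi$, being a bijective coalgebra homomorphism, must permute the simple subcoalgebras of $A_{Nn}^{\mu\lambda}$ while preserving their dimensions. Combined with the explicit cosemisimple decomposition recorded in Section 2, this should settle the statement almost immediately, and the odd-$n$ hypothesis will enter only through the YD-module perspective that the paper wishes to emphasize.

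Concretely, the cosemisimple decomposition of $A_{Nn}^{\mu\lambda}$ expresses every simple subcoalgebra as either a one-dimensional piece $\k g$ with $g\in G$, or a four-dimensional piece $C_{s't'}$. Since $\dim C_{N1}=4$, the image $\psi(C_{N1})$ is again a four-dimensional simple subcoalgebra, and so must equal $C_{st}$ for some $s\in\overline{1,N}$ and $t\in\overline{1,n-1}$. This is really the whole content of the lemma.

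In the Yetter-Drinfeld spirit of the paper, I would rederive the same conclusion via Lemma \ref{KeyLemma}. By the classification of simple YD modules over $A_{Nn}^{\mu\lambda}$ in \cite{Shi2020odd} (where the hypothesis $n=2m+1$ is used), there is a simple YD module $V$ with $\mathrm{Supp}(V)=C_{N1}$, of dimension $>1$. Applying Lemma \ref{KeyLemma} and the remark after it, $V^{\psi}$ is a simple YD module with $\dim V^{\psi}=\dim V$ and $\mathrm{Supp}(V^{\psi})=\psi(C_{N1})$; since the supports of simple YD modules of dimension greater than one can only be four-dimensional simple subcoalgebras, the classification forces $\psi(C_{N1})=C_{st}$ for some admissible $(s,t)$.

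\textbf{Main obstacle.} There is essentially no substantive obstacle at this level: the lemma is a bookkeeping statement about how $\psi$ acts on the coradical. The genuine work — sharpening the conclusion to pin down which particular pairs $(s,t)$ can arise, and ultimately computing $\mathrm{Aut}_{\mathrm{Hopf}}\bigl(A_{Nn}^{\mu\lambda}\bigr)$ — is what I expect the next lemmas of Section 3 to handle, by combining the invariance of supports under $\psi$ with the algebra relations among the $x_{ij}$ and with the braiding data carried by $V^{\psi}$.
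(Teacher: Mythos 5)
Your primary argument is correct and takes a genuinely different, more elementary route than the paper's. You use only that $\psi$ is a coalgebra automorphism: it carries the $4$-dimensional simple subcoalgebra $C_{N1}$ to a $4$-dimensional simple subcoalgebra, and by the cosemisimple decomposition recalled in Section 2 the simple subcoalgebras of $A_{Nn}^{\mu\lambda}$ are exactly the one-dimensional $\k g$, $g\in G$, and the four-dimensional $C_{st}$, so $\psi(C_{N1})=C_{st}$. This works uniformly in $n$ (the parity hypothesis is indeed irrelevant here, as you observe; the paper proves the same statement twice, separately for odd and even $n$). The paper instead stays within its advertised method: it takes the $2$-dimensional simple Yetter--Drinfeld module $W=\mathscr{G}_{k,p}^{N0}$ with ${\rm Supp}(W)=C_{N1}$, applies Lemma \ref{KeyLemma} to get ${\rm Supp}(W^{\psi})=\psi(C_{N1})$, and reads off from the classification in \cite{Shi2020odd} that this support must be some $C_{st}$ --- which is why the odd case invokes the odd classification. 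What the paper's route buys is a first illustration of the ``Yetter--Drinfeld modules as invariants'' technique that drives the rest of Section 3; what your route buys is brevity and independence from the classification results. One caveat on your secondary (YD) rederivation: it is not true that every simple YD module of dimension $>1$ has a four-dimensional simple subcoalgebra as support --- for instance $\mathscr{E}_{jk,p}^{s}$ is $2$-dimensional with support $\k g_s^{+}+\k g_s^{-}$, spanned by group-likes. The paper rules out these cases by noting that $C_{N1}$ contains no group-likes and $\psi$ permutes group-likes (equivalently, $\psi(C_{N1})$ is $4$-dimensional while those supports are $2$-dimensional); your YD version needs the same observation at that step, though this does not affect your primary, already complete, proof.
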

\begin{proof}
 According to \cite[Theorem 3.1 and Table 1]{Shi2020odd},
 there are exactly $8N^2m(m+1)$ pairwise  non-isomorphic Yetter-Drinfeld modules over
 $A_{N\, 2m+1}^{\mu\lambda}$ of dimension 2:
\begin{enumerate}
\item $\mathscr{C}_{jk,p}^{st}$, $s\in\overline{1,N}$, $t\in\overline{0,m-1}$,
    $\frac j2\in\overline{1,m}$,
    $k\in\overline{0,N-1}$, $p\in\Bbb{Z}_2$,
    $\mathscr{C}_{jk,p}^{st}\cong \Lambda_{s, 2t+2}$ as comdules;
\item $\mathscr{D}_{jk,p}^{st}$, $s\in\overline{1,N}$, $t\in\overline{0,m-1}$,
    $\frac j2\in\overline{1,m}$,
    $k\in\overline{0,N-1}$, $p\in\Bbb{Z}_2$,
    $\mathscr{D}_{jk,p}^{st}\cong \Lambda_{s, 2t+2}$ as comdules;
\item $\mathscr{E}_{jk,p}^{s}$, $s\in\overline{1,N}$,
    $\frac j2\in\overline{1,m}$,
    $k\in\overline{0,N-1}$, $p\in\Bbb{Z}_2$,
    $\mathscr{E}_{jk,p}^{s}\cong \k g_s^+\oplus \k g_s^-$ as comodules;
\item $\mathscr{F}_{k,p}^{st}$, $s\in\overline{1,N}$, $t\in\overline{0,m-1}$,
    $k\in\overline{0,N-1}$, $p\in\Bbb{Z}_2$,
    $\mathscr{F}_{k,p}^{st}\cong \Lambda_{s, 2t+2}$ as comodules;
\item $\mathscr{G}_{k,p}^{st}$, $s\in\overline{1,N}$, $t\in\overline{0,m-1}$,
    $k\in\overline{0,N-1}$, $p\in\Bbb{Z}_2$,
    $\mathscr{G}_{k,p}^{st}\cong \Lambda_{s, 2t+1}$ as comodules;
\item $\mathscr{H}_{jk,p}^{st}$, $s\in\overline{1,N}$, $t\in\overline{0,m-1}$,
    $\frac j2\in\overline{1,m}$,
    $k\in\overline{0,N-1}$, $p\in\Bbb{Z}_2$,
    $\mathscr{H}_{jk,p}^{st}\cong \Lambda_{s, 2t+1}$ as comodules;
\item $\mathscr{I}_{jk,p}^{st}$, $s\in\overline{1,N}$, $t\in\overline{0,m-1}$,
    $\frac j2\in\overline{1,m}$,
    $k\in\overline{0,N-1}$, $p\in\Bbb{Z}_2$,
    $\mathscr{I}_{jk,p}^{st}\cong \Lambda_{s, 2t+1} $ as comodules in this situation;
\item $\mathscr{I}_{jk,p}^{st}$, $s\in\overline{1,N}$, $t=m$,
    $\frac j2\in\overline{1,m}$,
    $k\in\overline{0,N-1}$, $p\in\Bbb{Z}_2$,
    $\mathscr{I}_{jk,p}^{st}\cong \k h_s^+\oplus \k h_s^-$ as comodules in this situation.
\end{enumerate}
Here $g_s^{\pm}=x_{11}^{2s}\pm x_{12}^{2s}$, $h_s^{\pm}=x_{11}^{2s+1}\chi_{22}^{2m}
\pm\sqrt{\lambda} x_{12}^{2s+1}\chi_{21}^{2m}$ are group-likes of $A_{N\, 2m+1}^{\mu\lambda}$.
Set $W=\mathscr{G}_{k,p}^{N0}$, then ${\rm Supp}(W)=C_{N1}=\k x_{11}+\k x_{12}+\k x_{21}+\k x_{22}$.
Since $C_{N1}$ does not contain any group-likes, so
$\psi(C_{N1})=\psi({\rm Supp}(W))={\rm Supp} (W^{\psi^{-1}})=C_{s\,\, 2t+1}$ or $C_{s\,\, 2t+2}$
for some $s\in\overline{1,N}$, $t\in\overline{0,m-1}$.
\end{proof}

\begin{lemma}
Let  $n=2m$ be even and $\psi$ be any  Hopf algebra automorphism of  $A_{Nn}^{\mu\lambda}$, then
there exist some $s\in\overline{1,N}$ and  $t\in\overline{1,n-1}$ such that $\psi(C_{N1})=C_{st}$.
\end{lemma}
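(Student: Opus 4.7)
The plan is to mirror, almost verbatim, the strategy used for the odd case in the previous lemma: exhibit a two-dimensional simple Yetter--Drinfeld module over $A_{N\,2m}^{\mu\lambda}$ whose comodule support is precisely $C_{N1}$, and then transport this through $\psi$ using Lemma \ref{KeyLemma} and its Remark. The only ingredient we must swap is the classification we invoke: instead of the odd-$n$ list from \cite{Shi2020odd}, we use the even-$n$ classification of pairwise non-isomorphic simple Yetter--Drinfeld modules over $A_{N\,2m}^{\mu\lambda}$ established in \cite{Shi2020even}.

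First I would read off from that classification the list of two-dimensional simple objects in $\ydh$ and record which $\Lambda_{st}$ appears as each one's underlying $A_{Nn}^{\mu\lambda}$-comodule. From inspection of the families produced in \cite{Shi2020even}, at least one family covers the comodule $\Lambda_{N,1}$; pick any member $W$ of that family, so that
\[
\mathrm{Supp}(W) \;=\; \k x_{11} + \k x_{12} + \k x_{21} + \k x_{22} \;=\; C_{N1}.
\]
By Lemma \ref{KeyLemma} the twisted object $W^{\psi}$ is again a two-dimensional Yetter--Drinfeld module, and by the Remark following the lemma,
\[
\psi(C_{N1}) \;=\; \psi\bigl(\mathrm{Supp}(W)\bigr) \;=\; \mathrm{Supp}\bigl(W^{\psi}\bigr).
\]

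Now I would classify, using the even-$n$ list, the possible supports of a two-dimensional simple YD module: by the coalgebra decomposition of $A_{N\,2m}^{\mu\lambda}$ recalled in Section~2, each such support is either a $C_{s't'}$ with $t'\in\overline{1,n-1}$ or a direct sum $\k g \oplus \k g'$ of two group-like one-dimensional subcoalgebras. Hence $\mathrm{Supp}(W^\psi)$ is of one of these two types. To conclude $\psi(C_{N1})=C_{st}$, I rule out the group-like case: $\psi$ permutes the group-like elements of $A_{N\,2m}^{\mu\lambda}$, so $\psi(C_{N1})$ contains a nontrivial group-like iff $C_{N1}$ does, which is false.

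The step I expect to be the main obstacle is purely bookkeeping from \cite{Shi2020even}, namely checking that the even-$n$ classification does produce a simple two-dimensional YD module with underlying comodule $\Lambda_{N,1}$, and verifying that the dichotomy ``support equals some $C_{s't'}$ or a sum of two group-like lines'' is exhaustive for two-dimensional simple YD modules when $n$ is even. Once these two facts are extracted from \cite{Shi2020even}, the rest of the argument runs in parallel with the odd case and requires no new calculation.
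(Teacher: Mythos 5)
Your proposal is correct and follows essentially the same route as the paper: the paper also invokes the even-$n$ classification from \cite{Shi2020even}, takes the member $\mathscr{G}_{k,p}^{N0}$ (of the family with underlying comodule $\Lambda_{s\,2t+1}$, so $s=N$, $t=0$ gives support $C_{N1}$), and concludes via Lemma \ref{KeyLemma} and the absence of group-likes in $C_{N1}$ that $\psi(C_{N1})=C_{st}$. The two bookkeeping points you flag are exactly what the paper's proof supplies by listing the classification, and they hold as you expect.
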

\begin{proof}
According to \cite[Theorem 3.1 and Table 1]{Shi2020even},
there are exactly $2N^2(4m^2-1)$  non-isomorphic Yetter-Drinfeld modules
over $A_{N\, 2m}^{\mu\lambda}$ of dimension 2:
\begin{enumerate}
\item $\mathscr{B}_{01k}^s$, $s\in\overline{1,N}$, $k\in\overline{0,N-1}$,
           $\mathscr{B}_{01k}^s\cong \k g_s^+\oplus \k g_s^-$ as comodules;
\item $\mathscr{C}_{ijk,p}^{st}$, $ij=00$ or $01$, $k\in\overline{0,N-1}$, $p\in \Bbb{Z}_2$,
          $s\in\overline{1,N}$, $t\in\overline{0,m-2}$,
          $\mathscr{C}_{ijk,p}^{st}\cong \Lambda_{s\, 2t+2}$ as comodules in this situation;
\item $\mathscr{C}_{ijk,p}^{st}$, $i=0$,
          $j=\left\{\begin{array}{ll} i+1,&\text{if}\,\lambda=1,\\i,&\text{if}\,\lambda=-1,\end{array}\right.$
          $k\in\overline{0,N-1}$, $s\in\overline{1,N}$, $p=0$,
          $t=m-1$,
          $\mathscr{C}_{ijk,p}^{st}\cong \k h_s^+\oplus \k h_s^-$ as comodules in this situation;
\item $\mathscr{D}_{jk,p}^{st}$, $\frac j2\in\overline{1,m-1}$,
          $k\in\overline{0,N-1}$, $p\in \Bbb{Z}_2$, $s\in\overline{1, N}$, $t\in\overline{0,m-1}$,
          $\mathscr{D}_{jk,p}^{st}\cong \left\{\begin{array}{ll}
          \Lambda_{s\, 2t+2}, & t\neq m-1, \\
          \k h_s^+\oplus \k h_s^-, & t=m-1, \\
          \end{array}\right.$ as comodules;
\item $\mathscr{E}_{jk,p}^{st}$, $\frac j2\in\overline{1,m-1}$,
          $k\in\overline{0,N-1}$, $p\in \Bbb{Z}_2$, $s\in\overline{1,N}$, $t\in\overline{0,m-1}$,
          $\mathscr{E}_{jk,p}^{st}\cong \left\{\begin{array}{ll}
          \Lambda_{s\, 2t}, & t\neq 0, \\
          \k g_s^+\oplus \k g_s^-, & t=0, \\
          \end{array}\right.$ as comodules;
\item $\mathscr{G}_{jk,p}^{st}$,
          $\left\{\begin{array}{ll}\frac j2\in\overline{1,m-1}, &\text{if}\,\lambda=1,\vspace{1mm}\\
             \frac {j+1}2\in\overline{1,m}, &\text{if}\,\lambda=-1,
          \end{array}\right.$
          $k\in\overline{0,N-1}$, $p\in \Bbb{Z}_2$, $s\in\overline{1,N}$,
          $t\in\overline{0,m-1}$,
          $\mathscr{G}_{jk,p}^{st}\cong \Lambda_{s\, 2t+1}$ as comdules;
\item $\mathscr{H}_{jk,p}^{st}$,
          $\left\{\begin{array}{ll}\frac j2\in\overline{1,m-1}, &\text{if}\,\lambda=1,\vspace{1mm}\\
             \frac {j+1}2\in\overline{1,m}, &\text{if}\,\lambda=-1,
          \end{array}\right.$
          $k\in\overline{0,N-1}$, $p\in \Bbb{Z}_2$, $s\in\overline{1,N}$,
          $t\in\overline{0,m-1}$,
          $\mathscr{H}_{jk,p}^{st}\cong \Lambda_{s\, 2t+1}$ as comdules;
\item $\mathscr{P}_{ijk,p}^{st}$, $ij=00$ or $01$, $k\in\overline{0,N-1}$, $p\in \Bbb{Z}_2$,
          $s\in\overline{1,N}$, $t\in\overline{0,m-1}$,
          $\mathscr{P}_{ijk,p}^{st}\cong \Lambda_{s\, 2t+1}$ as comdules.
\end{enumerate}
Here $g^{\pm}_s=x_{11}^{2s}\pm x_{12}^{2s}$, $h^{\pm}_s=x_{11}^{2s}\chi_{11}^{2m}
\pm\sqrt{\lambda} x_{12}^{2s}\chi_{12}^{2m}$ are goup-likes of $A_{N\, 2m}^{\mu\lambda}$.
Set $W=\mathscr{G}_{k,p}^{N0}$, then
${\rm Supp}(W)=C_{N1}=\k x_{11}+\k x_{12}+\k x_{21}+\k x_{22}$.
Since $C_{N1}$ does not contain any group-likes, so
$\psi(C_{N1})=\psi({\rm Supp}(W))={\rm Supp} (W^{\psi^{-1}})=C_{st}$
for some $s\in\overline{1,N}$, $t\in\overline{1,n-1}$.
\end{proof}

Let $\phi$ be any Hopf algebra automorphism of  $A_{Nn}^{\mu\lambda}$, then $\phi(C_{N1})=C_{st}$
for some $s\in\overline{1,N}$, $t\in\overline{1,n-1}$. So we can suppose that
\begin{align}\label{DefinitionofPhi}
\left\{\begin{array}{l}
\phi(x_{11})=a_1x_{11}^{2s}\chi_{11}^t+(1-a_1)x_{11}^{2s}\chi_{22}^t
                       +a_2x_{12}^{2s}\chi_{12}^t+a_3x_{12}^{2s}\chi_{21}^t,\vspace{1mm}\\
\phi(x_{22})=b_1x_{11}^{2s}\chi_{11}^t+(1-b_1)x_{11}^{2s}\chi_{22}^t
                       +b_2x_{12}^{2s}\chi_{12}^t+b_3x_{12}^{2s}\chi_{21}^t,\vspace{1mm}\\
\phi(x_{12})=d_1x_{11}^{2s}\chi_{11}^t-d_1 x_{11}^{2s}\chi_{22}^t
                       +d_2x_{12}^{2s}\chi_{12}^t+d_3x_{12}^{2s}\chi_{21}^t,\vspace{1mm}\\
\phi(x_{21})=e_1x_{11}^{2s}\chi_{11}^t-e_1 x_{11}^{2s}\chi_{22}^t
                       +e_2x_{12}^{2s}\chi_{12}^t+e_3x_{12}^{2s}\chi_{21}^t,
\end{array}\right.
\end{align}
where $a_i$, $b_i$, $d_i$, $e_i\in\k$ for $i\in\overline{1,3}$.

\begin{lemma}
Let $\phi$ be an automorphism of $A_{Nn}^{\mu\lambda}$ as defined in \eqref{DefinitionofPhi}.
If  $t$ is odd, then
\begin{align}
\phi\left(x_{11}^2\right)=\phi\left(x_{22}^2\right)
&\Leftrightarrow \left\{\begin{array}{ll}
(a_1-b_1)(1-a_1-b_1)=0,&\\
a_2^2+a_3^2=b_2^2+b_3^2,&\\
a_2a_3=b_2b_3, &\text{if}\,\, t\neq \frac n2,\\
(1+\lambda)a_2a_3=(1+\lambda)b_2b_3, &\text{if}\,\, t= \frac n2.
\end{array}\right. \label{eq1}\\
\phi\left(x_{12}^2\right)=\phi\left(x_{21}^2\right)
&\Leftrightarrow \left\{\begin{array}{ll}
d_1^2=e_1^2, &\vspace{1mm}\\
d_2^2+d_3^2=e_2^2+e_3^2,&\\
d_2d_3=e_2e_3, &\text{if}\,\, t\neq \frac n2,\\
(1+\lambda)d_2d_3=(1+\lambda)e_2e_3, &\text{if}\,\, t= \frac n2.
\end{array}\right. \label{eq2}\\
\phi(x_{11}x_{12})=0
&\Leftrightarrow \left\{\begin{array}{ll}
(1-2a_1)d_1=0,\quad a_2d_2+a_3d_3=0,&\\
d_1=0,\quad a_2d_3=0,\quad a_3d_2=0,&\text{if}\,\, t\neq \frac n2,\vspace{1mm}\\
a_2d_3+\lambda a_3d_2=0, &\text{if}\,\, t=\frac n2.
\end{array}\right. \label{eq3}\\
\phi(x_{22}x_{12})=0
&\Leftrightarrow \left\{\begin{array}{ll}
(1-2b_1)d_1=0,\quad b_2d_2+b_3d_3=0,&\\
d_1=0,\quad b_2d_3=0,\quad b_3d_2=0,&\text{if}\,\, t\neq \frac n2,\vspace{1mm}\\
b_2d_3+\lambda b_3d_2=0, &\text{if}\,\,t=\frac n2.
\end{array}\right. \label{eq4}\\
\phi(x_{11}x_{21})=0
&\Leftrightarrow \left\{\begin{array}{ll}
(1-2a_1)e_1=0,\quad a_2e_2+a_3e_3=0,&\\
e_1=0,\quad a_2e_3=0,\quad a_3e_2=0,&\text{if}\,\, t\neq \frac n2,\vspace{1mm}\\
a_2e_3+\lambda a_3e_2=0, &\text{if}\,\,t=\frac n2.
\end{array}\right. \label{eq5}\\
\phi(x_{22}x_{21})=0
&\Leftrightarrow \left\{\begin{array}{ll}
(1-2b_1)e_1=0,\quad b_2e_2+b_3e_3=0,&\\
e_1=0,\quad b_2e_3=0,\quad b_3e_2=0,&\text{if}\,\, t\neq \frac n2,\vspace{1mm}\\
b_2e_3+\lambda b_3e_2=0, &\text{if}\,\,t=\frac n2.
\end{array}\right. \label{eq6}
\end{align}
\end{lemma}
\begin{proof}
It is a direct verification.
\end{proof}

\begin{lemma}\label{tIsEven}
Let $\phi$ be an automorphism of $A_{Nn}^{\mu\lambda}$ as defined in \eqref{DefinitionofPhi}.
If  $t$ is even, then
\begin{align}
\phi\left(x_{11}^2\right)=\phi\left(x_{22}^2\right)
&\Leftrightarrow \left\{\begin{array}{ll}
(a_1-b_1)(1-a_1-b_1)=0, &\\
a_2a_3=b_2b_3,&\\
a_1=b_1,\quad a_2^2=b_2^2,\quad a_3^2=b_3^2,  &\text{if}\,\, t\neq \frac n2,\vspace{1mm}\\
a_2^2+\lambda a_3^2=b_2^2+\lambda b_3^2, &\text{if}\,\, t= \frac n2.
\end{array}\right. \label{eq7}\\
\phi\left(x_{12}^2\right)=\phi\left(x_{21}^2\right)
&\Leftrightarrow \left\{\begin{array}{ll}
d_1^2=e_1^2, &\\
d_2d_3=e_2e_3,&\\
d_2^2=e_2^2,\quad d_3^2=e_3^2,  &\text{if}\,\, t\neq \frac n2,\vspace{1mm}\\
d_2^2+\lambda d_3^2=e_2^2+\lambda e_3^2, &\text{if}\,\, t= \frac n2.
\end{array}\right. \label{eq8}\\
\phi(x_{11}x_{12})=0
&\Leftrightarrow \left\{\begin{array}{ll}
(1-2a_1)d_1=0,\quad a_2d_3+a_3d_2=0,&\\
d_1=0,\quad a_2d_2=0,\quad a_3d_3=0,&\text{if}\,\, t\neq \frac n2,\vspace{1mm}\\
a_2d_2+\lambda a_3d_3=0, &\text{if}\,\,t=\frac n2.
\end{array}\right.\label{eq9}\\
\phi(x_{22}x_{12})=0
&\Leftrightarrow \left\{\begin{array}{ll}
(1-2b_1)d_1=0,\quad b_2d_3+b_3d_2=0,&\\
d_1=0,\quad b_2d_2=0,\quad b_3d_3=0,&\text{if}\,\, t\neq \frac n2,\vspace{1mm}\\
b_2d_2+\lambda b_3d_3=0, &\text{if}\,\,t=\frac n2.
\end{array}\right.\label{eq10}\\
\phi(x_{11}x_{21})=0
&\Leftrightarrow \left\{\begin{array}{ll}
(1-2a_1)e_1=0,\quad a_2e_3+a_3e_2=0,&\\
e_1=0,\quad a_2e_2=0,\quad a_3e_3=0,&\text{if}\,\, t\neq \frac n2,\vspace{1mm}\\
a_2e_2+\lambda a_3e_3=0, &\text{if}\,\,t=\frac n2.
\end{array}\right. \label{eq11}\\
\phi(x_{22}x_{21})=0
&\Leftrightarrow \left\{\begin{array}{ll}
(1-2b_1)e_1=0,\quad b_2e_3+b_3e_2=0,&\\
e_1=0,\quad b_2e_2=0,\quad b_3e_3=0,&\text{if}\,\, t\neq \frac n2,\vspace{1mm}\\
b_2e_2+\lambda b_3e_3=0, &\text{if}\,\,t=\frac n2.
\end{array}\right.\label{eq12}
\end{align}
\end{lemma}
\begin{proof}
It is a direct verification.
\end{proof}

\begin{lemma}
Let $\phi$ be an automorphism of $A_{Nn}^{\mu\lambda}$ as defined in \eqref{DefinitionofPhi}.
Then
\begin{align}
\Delta\phi\left(x_{11}\right)
=(\phi\otimes \phi)\Delta\left(x_{11}\right)
&\Leftrightarrow \left\{\begin{array}{ll}
a_1^2+d_1e_1=a_1,& a_2a_1+d_2e_1=0,\\
a_1a_2+d_1e_2=a_2,& a_2^2+d_2e_2=0,\\
a_1a_3+d_1e_3=0,& a_2a_3+d_2e_3=a_1,\\
a_3a_1+d_3e_1=a_3,& a_3a_2+d_3e_2=1-a_1,\\
a_3^2+d_3e_3=0.&
\end{array}\right. \label{eq13}\\
\Delta\phi\left(x_{12}\right)
=(\phi\otimes \phi)\Delta\left(x_{12}\right)
&\Leftrightarrow \left\{\begin{array}{ll}
a_1d_1+d_1b_1=d_1, & a_1d_2+d_1b_2=d_2,\\
a_1d_3+d_1b_3=0, & a_2d_1+d_2b_1=0,\\
a_2d_2+d_2b_2=0,&a_2d_3+d_2b_3=d_1,\\
a_3d_1+d_3b_1=d_3,&a_3d_2+d_3b_2=-d_1,\\
a_3d_3+d_3b_3=0.&
\end{array}\right. \label{eq14}\\
\Delta\phi\left(x_{22}\right)
=(\phi\otimes \phi)\Delta\left(x_{22}\right)
&\Leftrightarrow \left\{\begin{array}{ll}
b_1^2+d_1e_1=b_1, & b_2b_3+d_3e_2=b_1,\\
b_2b_3+d_2e_3=1-b_1, & b_1b_2+d_1e_2=0,\\
b_1b_3+d_1e_3=b_3, &b_1b_2+d_2e_1=b_2,\\
b_1b_3+d_3e_1=0, & b_2^2+d_2e_2=0,\\
b_3^2+d_3e_3=0. &
\end{array}\right. 
\\
\Delta\phi\left(x_{21}\right)
=(\phi\otimes \phi)\Delta\left(x_{21}\right)
&\Leftrightarrow \left\{\begin{array}{ll}
b_1e_1+e_1a_1=e_1, & b_2e_3+e_2a_3=e_1,\\
a_1e_2+b_2e_1=0, &b_1e_2+a_2e_1=e_2,\\
b_3e_1+a_1e_3=e_3, &b_3e_2+e_3a_2=-e_1,\\
b_1e_3+a_3e_1=0, & b_2e_2+e_2a_2=0,\\
b_3e_3+e_3a_3=0. &
\end{array}\right. \label{Delta_phi_x21}
\end{align}
\end{lemma}
\begin{proof}
It is a direct verification.
\end{proof}

\begin{theorem} \label{MainProof}
If $\phi$ is a  Hopf  algebra automorphism of $A_{Nn}^{\mu\lambda}$, then $\phi$
is one of  the  following automorphisms.
\begin{enumerate}
\item $\Psi_{d_2}^{s,t}$ is a Hopf algebra automorphism of $A_{Nn}^{\mu\lambda}$ such that
                   \begin{align*}
                   \Psi_{d_2}^{s,t}\left(x_{11}\right)&=x_{11}^{2s}\chi_{11}^t,\vspace{1mm} &
                   \Psi_{d_2}^{s,t}\left(x_{22}\right)&=x_{11}^{2s}\chi_{22}^t,\vspace{1mm}\\
                   \Psi_{d_2}^{s,t}\left(x_{12}\right)&=d_2x_{12}^{2s}\chi_{12}^t,\vspace{1mm}&
                   \Psi_{d_2}^{s,t}\left(x_{21}\right)&=d_2^{-1}x_{12}^{2s}\chi_{21}^t.
                   \end{align*}
        And  one of the following conditions is satisfied.
        \begin{enumerate}
        \item $t$ is odd, $t\neq \frac n2$,  $(N, 2s+t)=1=(t, n)$,
                  $\left\{\begin{array}{ll}
                  d_2^{2N}=1=d_2^4, &\text{$n$ is even},\vspace{1mm}\\
                  d_2^2=1, &  \text{$n$ is odd}.\\
                  \end{array}\right.$
        \item $t=\frac n2=1$, $d_2^{2N}=1=d_2^4$ and $(2s+1, N)=1$.
        \end{enumerate}
\item $\Phi_{d_3}^{s,t}$ is a Hopf algebra automorphism of $A_{Nn}^{\mu\lambda}$ such that
                   \begin{align*}
                   \Phi_{d_3}^{s,t}\left(x_{11}\right)&=x_{11}^{2s}\chi_{22}^t,\vspace{1mm}&
                   \Phi_{d_3}^{s,t}\left(x_{22}\right)&=x_{11}^{2s}\chi_{11}^t,\vspace{1mm}\\
                   \Phi_{d_3}^{s,t}\left(x_{12}\right)&=d_3x_{12}^{2s}\chi_{21}^t,\vspace{1mm}&
                   \Phi_{d_3}^{s,t}\left(x_{21}\right)&=d_3^{-1}x_{12}^{2s}\chi_{12}^t.
                   \end{align*}
        And  one of the following conditions is satisfied.
        \begin{enumerate}
        \item $t$ is odd, $t\neq \frac n2$,  $(N, 2s+t)=1=(t, n)$,
                  $\left\{\begin{array}{ll}
                  d_3^{2N}=1=d_3^4, &\text{$n$ is even},\vspace{1mm}\\
                  d_3^2=1, &  \text{$n$ is odd}.\\
                  \end{array}\right.$
        \item $t=\frac n2=1$, $d_3^{2N}=1=d_3^4$ and $(2s+1, N)=1$.
        \end{enumerate}
\item If n=2, $(2s+1, N)=1$
and $\left\{\begin{array}{ll}
\zeta_1^4=\zeta_2^4=\mu=1, &\text{$N$ even,}\vspace{1mm}\\
\zeta_1^2=\zeta_2^2=\mu, &\text{$N$  odd, }\\
\end{array}\right.$ then
$\Gamma^{\zeta_1,\zeta_2}_s$ is a Hopf algebra automorphism of $A_{Nn}^{\mu+}$ such that
             \begin{align}\label{DefAutoGamma}
             \left\{\begin{array}{l}
             \Gamma^{\zeta_1,\zeta_2}_s(x_{11})=\frac12\left[x_{11}^{2s}\left(x_{11}+x_{22}\right)
                             + x_{12}^{2s}\left(\zeta_2 x_{12}+\zeta_2^{-1}x_{21}\right)\right],\vspace{1mm}\\
             \Gamma^{\zeta_1,\zeta_2}_s(x_{22})=\frac12\left[x_{11}^{2s}\left(x_{11}+x_{22}\right)
                             -x_{12}^{2s} \left(\zeta_2x_{12}+\zeta_2^{-1}x_{21}\right)\right],\vspace{1mm}\\
             \Gamma^{\zeta_1,\zeta_2}_s(x_{12})=\frac{\zeta_1}2\left[x_{11}^{2s}\left(x_{11}-x_{22}\right)
                             - x_{12}^{2s} \left(\zeta_2x_{12}-\zeta_2^{-1}x_{21}\right)\right],\vspace{1mm}\\
            \Gamma^{\zeta_1,\zeta_2}_s(x_{21})=\frac{\zeta_1^{-1}}2\left[x_{11}^{2s}\left(x_{11}-x_{22}\right)
                             + x_{12}^{2s} \left(\zeta_2x_{12}-\zeta_2^{-1}x_{21}\right)\right].
            \end{array}\right.
            \end{align}
\end{enumerate}
\end{theorem}

\begin{proof}
(1) Suppose $t$ is odd and $t\neq \frac n2$, then $d_1=e_1=0$. The first line of \eqref{eq13} implies that $a_1=0$ or $1$.
         \begin{enumerate}[(a)]
         \item Suppose $a_1=1$, then $a_2=a_3=0=d_2e_2$ from the formula \eqref{eq13}
                   and $d_3=0$ from the second line of  the formula \eqref{eq14}.
                   Since $\phi(x_{12})\neq 0$, $d_1=d_3=0$ implies that
                   $d_2\neq 0$, hence $e_2=0$.
                   Since $a_2=a_3=0$,
                   the second line and the third line of the formula \eqref{eq1}
                   implies that $b_2=b_3=0$.  Since $d_1=0\neq d_2$, the second line of the formula
                   \eqref{eq14} implies $b_1=0$.
                   The third line of the formula \eqref{eq13} implies $d_2e_3=1$.
                   Now we see that $\phi=\Psi_{d_2}^{s,t}$.
                   From the second line of \eqref{eq2}, we have $d_2^2=e_3^2$, so $d_2^4=1$.
                   It is easy to check that
                   \begin{align*}
                   \phi\left(x_{11}^{2k}+\mu x_{12}^{2k}\right)
                   =x_{11}^{2k(2s+t)}+\mu d_2^{2k}x_{12}^{2k(2s+t)}&\Rightarrow
                   (N, 2s+t)=1,  d_2^{2N}=1,\\
                   S\phi=\phi S&\Rightarrow d_2^{4N}=1.
                   \end{align*}
                   If $k$ is even, then
                   \[
                   \phi(\chi_{12}^k)=x_{12}^{2ks}\chi_{12}^{kt},\quad
                   \lambda \phi(\chi_{21}^k)=\lambda x_{12}^{2ks}\chi_{21}^{kt},
                   \]
                   which implies that $(t, n)=1$ in case that $n$ is even. \\
                   If $k$ is odd, then
                   \[
                   \phi(\chi_{12}^k)=d_2 x_{12}^{2ks}\chi_{12}^{kt},\quad
                   \lambda \phi(\chi_{21}^k)=\lambda d_2^{-1}x_{12}^{2ks}\chi_{21}^{kt},
                   \]
                   which implies that $(t, n)=1$ and $d_2^2=1$ in case that $n$ is odd.

         \item  Suppose $a_1=0$,  then $a_2=a_3=0$, $d_3e_2=1$, $d_2=0=e_3$
                    from the formula  \eqref{eq13}.
                    It implies  $b_2=b_3=0$ from the second line and third line of \eqref{eq1}.
                    According to the fourth line of formula \eqref{eq14}, $b_1=1$. Hence
                    $\phi=\Phi_{d_3}^{s,t}$.
                   From the second line of \eqref{eq2}, we have $d_3^2=e_2^2$, so $d_3^4=1$.
                   It is easy to check that
                   \begin{align*}
                   \phi\left(x_{11}^{2k}+\mu x_{12}^{2k}\right)
                   =x_{11}^{2k(2s+t)}+\mu d_3^{2k}x_{12}^{2k(2s+t)}
                   &\Rightarrow
                   (N, 2s+t)=1,  d_3^{2N}=1,\\
                   S\phi=\phi S &\Rightarrow d_3^{4N}=1.
                   \end{align*}
                   If $k$ is even, then
                   \[
                   \phi(\chi_{12}^k)=x_{12}^{2ks}\chi_{21}^{kt},\quad
                   \lambda \phi(\chi_{21}^k)=\lambda x_{12}^{2ks}\chi_{12}^{kt},
                   \]
                   which implies that $(t, n)=1$ in case that $n$ is even. \\
                   If $k$ is odd, then
                   \[
                   \phi(\chi_{12}^k)=d_3 x_{12}^{2ks}\chi_{21}^{kt},\quad
                   \lambda \phi(\chi_{21}^k)=\lambda d_3^{-1}x_{12}^{2ks}\chi_{12}^{kt},
                   \]
                   which implies that $(t, n)=1$ and $d_3^2=1$ in case that $n$ is odd.
         \end{enumerate}
(2)  Suppose $t$ is odd and $t= \frac n2$. If $a_1\neq \frac12$,
then $d_1=e_1=0$ according to \eqref{eq9} and \eqref{eq11}.
          The first line of the formula
         \eqref{eq13} implies that $a_1=0$ or $1$.
         \begin{enumerate}[(a)]
         \item Suppose that $a_1=1$, then it implies that $a_2=a_3=0=d_2e_2$,
         $d_2e_3=1$, $d_3e_2=0$, $d_3e_3=0$ from \eqref{eq13}.
         Since $d_2e_3=1\neq 0$, we have $e_2=d_3=0$.
         The formula \eqref{eq14} implies that $b_1=b_2=b_3=0$.
         Now we have  $\phi=\Psi_{d_2}^{s,t}$.
         From \eqref{eq2}, we obtain $d_2^2=e_3^2$ which implies $d_2^4=1$.
                 Since $\phi(x_{11}x_{22})=\phi(x_{22}x_{11})$ and
                 $\phi(x_{12}x_{21})=\phi(\lambda x_{21}x_{12})$,
                 we have $n=2$.  Let $\theta=\pm 1$, and $k\in\overline{1,N}$, then
                 \begin{align*}
                 \phi\left(x_{11}^2+\theta x_{12}^2\right)
                 &=x_{11}^{2(2s+1)}+\theta d_2^2 x_{12}^{2(2s+1)},\\
                 \phi\left(x_{11}^{2k}+\mu x_{12}^{2k}\right)
                 &= \phi\left[\left(x_{11}^{2}+x_{12}^{2}\right)^{k-1} \left(x_{11}^2+\mu x_{12}^2\right)\right]
                 =x_{11}^{2(2s+1)k}+\mu d_2^{2k}x_{12}^{2(2s+1)k}.
                 \end{align*}
                 So $d_2^{2N}=1$ and $(2s+1, N)=1$.
                 $\phi S=S\phi\Rightarrow d_2^{4N}=1$.
         \item Suppose that $a_1=0$, it is easy to see that   $\phi=\Phi_{d_3}^{s,t}$ and $d_3^4=1$.
                 Since $\phi(x_{11}x_{22})=\phi(x_{22}x_{11})$ and
                 $\phi(x_{12}x_{21})=\phi(\lambda x_{21}x_{12})$,
                 we have $n=2$.  Let $\theta=\pm 1$, and $k\in\overline{1,N}$, then
                 \begin{align*}
                 \phi\left(x_{11}^2+\theta x_{12}^2\right)&=x_{11}^{4s+2}+\theta d_3^2 x_{12}^{4s+2},\\
                 \phi\left(x_{11}^{2k}+\mu x_{12}^{2k}\right)
                 &= \phi\left[\left(x_{11}^{2}+x_{12}^{2}\right)^{k-1} \left(x_{11}^2+\mu x_{12}^2\right)\right]
                 =x_{11}^{2(2s+1)k}+\mu d_3^{2k}x_{12}^{2(2s+1)k}.
                 \end{align*}
                 So $d_3^{2N}=1$ and $(2s+1, N)=1$.
                 $\phi S=S\phi\Rightarrow d_3^{4N}=1$.
         \item Suppose that $a_1=\frac12$, then $b_1=\frac12$ from the formula \eqref{eq1}.
                  The first line of the formula \eqref{eq13} implies $d_1e_1=a_1-a_1^2=\frac 14$.
                  Since $d_1^2=e_1^2$,
                  we can set $d_1=\frac{\zeta_1}2, e_1=\frac{\zeta_1^{-1}}2$, $\zeta_1^4=1$,
                  Then the formulas \eqref{eq13} implies that
                  \[
                  a_2=-d_2\zeta_1^{-1}=e_2\zeta_1,\quad
                  a_3=-e_3\zeta_1=d_3\zeta_1^{-1},\quad a_2a_3=\frac14.
                  \]
                  According to  the formula \eqref{eq14}, we have $d_2=b_2\zeta_1$
                  and $d_3=-b_3\zeta_1$.
                  From the first line of the formula  \eqref{eq3}, we can deduce $a_2^2=a_3^2$,
                  which implies that $a_2=\frac{\zeta_2}2, a_3=\frac{\zeta_2^{-1}}{2}$,
                  $\zeta_2^4=1$.   Now we have
                  \begin{align*}
                  a_1&=\frac12, &a_2&=\frac{\zeta_2}2, &a_3&=\frac{\zeta_2^{-1}}2,\\
                  b_1&=\frac12, &b_2&=-\frac{\zeta_2}2, &b_3&=-\frac{\zeta_2^{-1}}2,\\
                  d_1&=\frac{\zeta_1}2,&
                  d_2&=-\frac{\zeta_1\zeta_2}2, &
                  d_3&=\frac{\zeta_1\zeta_2^{-1}}2,\\
                  e_1&=\frac{\zeta_1^{-1}}2,&
                  e_2&=\frac{\zeta_1^{-1}\zeta_2}2, &
                  e_3&=-\frac{\zeta_1^{-1}\zeta_2^{-1}}2.
                  \end{align*}
                  The identity $a_2d_3+\lambda a_3d_2=0$ in \eqref{eq3} implies $\lambda=1$.
                  By direct computation, we have
                  $\phi(x_{11}x_{22}-x_{22}x_{11})=0$ and $\phi(x_{12}x_{21}-x_{21}x_{12})=0$
                  which implies $n=2$ and $t=\frac{n}2=1$.
                  So $\phi=\Gamma^{\zeta_1,\zeta_2}_s$.
                  According to Lemma \ref{N2},  we have   $(2s+1, N)=1$
                  and $\left\{\begin{array}{ll}
                  \mu=1, &\text{if $N$ is even,}\vspace{1mm}\\
                  \zeta_1^2=\zeta_2^2=\mu, &\text{if $N$ is odd.}\\
                  \end{array}\right.$
         \end{enumerate}
 \par \noindent
(3) Suppose $t$ is even and $t\neq \frac n2$, then $a_1=b_1$, $d_1=e_1=0$ from Lemma \ref{tIsEven}.
         The first line of the formulas
         \eqref{eq13} implies that $a_1=0$ or $1$.  If $a_1=0$, then $a_2=a_3=0$ from
         the formulas \eqref{eq13}. So $e_1=e_2=e_3=0$ from \eqref{Delta_phi_x21}. It is a contradiction since
         $\phi(x_{21})=0$.  If $a_1=1$, then $a_2=a_3=0$ from  the formulas \eqref{eq13}.
         So $b_1=1$, $b_2=b_3=0$ from \eqref{eq7}. It is  a contradiction since $\phi(x_{11})=\phi(x_{22})$.
 \par\noindent
(4) Suppose $t$ is even and $t= \frac n2$. If $a_1\neq \frac12$, then $d_1=e_1=0$ from
        the formulas \eqref{eq9} and \eqref{eq11}.
         The first line of the formulas
         \eqref{eq13} implies that $a_1=0$ or $1$.
         \begin{enumerate}[(a)]
         \item Suppose $a_1=1$. From formulas \eqref{eq13} and \eqref{eq14}, it is easy to see that
                  $\phi=\Psi_{d_2}^{s,t}$.
                 Since $\phi(x_{11}x_{22})=\phi(x_{22}x_{11})$ and $\phi(x_{12}x_{21})=\phi(x_{21}x_{12})$,
                 we have $n=2$ and $\lambda=1$. It is a contradiction with that $t=\frac{n}2$ is even.
         \item Suppose $a_1=0$. From formulas \eqref{eq13} and \eqref{eq14}, it is easy to see that
                   $\phi=\Phi_{d_3}^{s,t}$.
                 Since $\phi(x_{11}x_{22})=\phi(x_{22}x_{11})$ and $\phi(x_{12}x_{21})=\phi(x_{21}x_{12})$,
                 we have $n=2$ and $\lambda=1$. It is a contradiction with that $t=\frac{n}2$ is even.
         \item Suppose $a_1=\frac12$, then $b_1=\frac{1}2$ from the first line of \eqref{eq7}.
                  The first line of the formulas \eqref{eq13} implies $d_1e_1=a_1-a_1^2=\frac 14$.
                  Since $d_1^2=e_1^2$, we have $d_1=\frac{\zeta_1}2, e_1=\frac{\zeta_1^{-1}}{2}$, $\zeta_1^4=1$.
                  Then the formulas \eqref{eq13} implies that
                  \[
                  a_2=-d_2\zeta_1^{-1}=e_2\zeta_1,\quad
                  a_3=-e_3\zeta_1=d_3\zeta_1^{-1},\quad a_2a_3=\frac14.
                  \]
                  According to  the formula \eqref{eq14}, we have $d_2=b_2\zeta_1$
                  and $d_3=-b_3\zeta_1$.
                  From the third line of the formula  \eqref{eq9}, we can deduce $a_2^2=\lambda a_3^2$,
                  which implies that $a_2=\frac{\zeta_2}2, a_3=\frac{\zeta_2^{-1}}{2}$, $\zeta_2^4=\lambda$.
                  Now we see
                  $\phi=\Gamma^{\zeta_1,\zeta_2}_s$ as defined in  \eqref{DefAutoGamma}
                  with $\zeta_1^4=1$, $\zeta_2^4=\lambda$.
                 Since $\phi(x_{11}x_{22})=\phi(x_{22}x_{11})$ and $\phi(x_{12}x_{21})=\phi(x_{21}x_{12})$,
                 we have $n=2$. It is a contradiction with that $t=\frac{n}2$ is even.
         \end{enumerate}
\end{proof}

\begin{lemma}\label{N2}
Set $\zeta_1^4=\zeta_2^4=1$ and suppose
$\phi=\Gamma^{\zeta_1,\zeta_2}_s$ as defined in \eqref{DefAutoGamma} is a Hopf  algebra automorphism of $A_{N2}^{\mu+}$,
then $(2s+1, N)=1$
and $\left\{\begin{array}{ll}
\mu=1, &\text{if $N$ is even,}\vspace{1mm}\\
\zeta_1^2=\zeta_2^2=\mu, &\text{if $N$ is odd.}\\
\end{array}\right.$
\end{lemma}
\begin{proof}
Set $\theta=\pm 1$, $k\in\overline{1,N}$, then
\begin{align*}
\phi\left(x_{11}^2+\theta x_{12}^2\right)
&=\frac14x_{11}^{4s}
     \left[\left(x_{11}+x_{22}\right)^2+\theta \zeta_1^2(x_{11}-x_{22})^2\right]\\
&\quad+\frac14x_{12}^{4s}
     \left[\left(\zeta_2x_{12}+ \zeta_2^{-1}x_{21}\right)^2
     +\theta \zeta_1^2\left(\zeta_2x_{12}-\zeta_2^{-1}x_{21}\right)^2\right]\\
&=\frac{x_{11}^{4s}}2\left[(1+\theta\zeta_1^2)x_{11}^2+(1-\theta\zeta_1^2)x_{11}x_{22}\right]\\
&\quad+\frac{x_{12}^{4s}}2\left[(1+\theta\zeta_1^2)\zeta_2^2x_{12}^2+(1-\theta\zeta_1^2)x_{12}x_{21}\right]\\
&=x_{11}^{4s}\left[\frac{1+\theta\zeta_1^2}2x_{11}^2+\frac{1-\theta\zeta_1^2}2x_{11}x_{22}\right]
\\
&\quad
    +x_{12}^{4s}
     \left[\frac{1+\theta\zeta_1^2}2\zeta_2^2x_{12}^2+\frac{1-\theta\zeta_1^2}2x_{12}x_{21}\right]\\
&=\left\{\begin{array}{ll}
     x_{11}^{4s+2}+\zeta_2^2 x_{12}^{4s+2}, &\zeta_1^2=\theta,\vspace{1mm}\\
     x_{11}^{4s+1}x_{22}+x_{12}^{4s+1}x_{21}, &\zeta_1^2=-\theta.\\
     \end{array}\right.\\
\phi\left(x_{11}^4+\theta x_{12}^4\right)
&=x_{11}^{8s}\left[\frac{1+\theta\zeta_1^2}2x_{11}^2+\frac{1-\theta\zeta_1^2}2x_{11}x_{22}\right]
     \left[\frac{1+\zeta_1^2}2x_{11}^2+\frac{1-\zeta_1^2}2x_{11}x_{22}\right]\\
&\quad+x_{12}^{8s}\left[\frac{1+\theta\zeta_1^2}2\zeta_2^2x_{12}^2
      +\frac{1-\theta\zeta_1^2}2x_{12}x_{21}\right]
 \\
 &\quad\cdot
     \left[\frac{1+\zeta_1^2}2\zeta_2^2x_{12}^2+\frac{1-\zeta_1^2}2x_{12}x_{21}\right]\\
&=\left\{\begin{array}{ll}
     x_{11}^{4(2s+1)}+x_{12}^{4(2s+1)}, &\theta=1,\vspace{1mm}\\
     x_{11}^{8s+3}x_{22}+\zeta_2^2 x_{12}^{8s+3}x_{21}, &\theta=-1.\\
     \end{array}\right.  \\
\phi\left(x_{11}^{2k}+\mu x_{12}^{2k}\right)
&=\left\{\begin{array}{ll}
     x_{11}^{(4m+2)(2s+1)}+\zeta_2^2 x_{12}^{(4m+2)(2s+1)}, &\zeta_1^2=\mu, k=2m+1,\vspace{1mm}\\
     x_{11}^{4m(2s+1)+4s+1}x_{22}+x_{12}^{4m(2s+1)+4s+1}x_{21}, &\zeta_1^2=-\mu, k=2m+1,\vspace{1mm}\\
     x_{11}^{4m(2s+1)}+x_{12}^{4m(2s+1)}, &\mu=1, k=2m, \vspace{1mm}\\
     x_{11}^{4m(2s+1)-1}x_{22}+\zeta_2^2 x_{12}^{4m(2s+1)-1}x_{21}, &\mu=-1, k=2m.\\
     \end{array}\right.
\end{align*}
So $1=x_{11}^{2N}+\mu x_{12}^{2N}$ implies that  $(2s+1, N)=1$
and $\left\{\begin{array}{ll}
\mu=1, &\text{if $N$ is even,}\\
\zeta_1^2=\zeta_2^2=\mu, &\text{if $N$ is odd.}\\
\end{array}\right.$

As for any $l\in\Bbb{Z}^+$, by induction, we have
\[
\left(x_{11}+\theta x_{22}\right)^{l+1}
=(2x_{11})^l(x_{11}+\theta x_{22}),\quad
\left(\zeta_2 x_{12} +\zeta_2^{-1}x_{21} \right)^{l+1}
=(2\zeta_2)^l (\zeta_2 x_{12} +\zeta_2^{-1}x_{21}).
\]
\begin{align*}
\phi S(x_{11})&=\phi\left(x_{11}^{4N-1}\right)\\
&=\frac1{2^{4N-1}}\left[x_{11}^{2s(4N-1)}\left(x_{11} +x_{22} \right)^{4N-1}
       +x_{12}^{2s(4N-1)}\left(\zeta_2 x_{12} +\zeta_2^{-1}x_{21} \right)^{4N-1}\right]\\
&=\frac1{2}\left[x_{11}^{2s(4N-1)+4N-2}\left(x_{11} +x_{22} \right)
       +\zeta_2^2 x_{12}^{2s(4N-1)+4N-2}\left(\zeta_2 x_{12} +\zeta_2^{-1}x_{21} \right)\right]\\
&=S\phi(x_{11}),\\
\phi S(x_{12})&=\phi\left(x_{21}^{4N-1}\right)\\
&=\frac{\zeta_1^{-(4N-1)}}{2^{4N-1}}\left[x_{11}^{2s(4N-1)}\left(x_{11}-x_{22} \right)^{4N-1}
       +x_{12}^{2s(4N-1)}\left(\zeta_2 x_{12} -\zeta_2^{-1}x_{21} \right)^{4N-1}\right]\\
&=\frac{\zeta_1}2\left[x_{11}^{2s(4N-1)+4N-2}\left(x_{11}-x_{22} \right)
       +\zeta_2^{2}x_{12}^{2s(4N-1)+4N-2}\left(\zeta_2 x_{12} -\zeta_2^{-1}x_{21} \right)\right]\\
&=S\phi(x_{12}).
\end{align*}
Similarly, $\phi S(x_{22})=S\phi(x_{22})$,  $\phi S(x_{21})=S\phi(x_{21})$.
\end{proof}

\begin{lemma}
Let $\Psi_{\xi_1}^{s_1, t_1}, \Psi_{\xi_2}^{s_2, t_2},
\Phi_{\xi_1}^{s_1, t_1}, \Phi_{\xi_2}^{s_2, t_2}$ be  automorphisms of $A_{Nn}^{\mu\lambda}$
as defined in Theorem \ref{MainProof}, then
\begin{align}\label{AutRelationsMain}
\left\{\begin{array}{l}
\Psi_{\xi_1}^{s_1, t_1}\Psi_{\xi_2}^{s_2, t_2}
=\Psi_{\xi_1^{2s_2+1}\xi_2}^{s_1(2s_2+t_2)+s_2t_1,\, t_1t_2},\quad
\Phi_{\xi_1}^{s_1, t_1}\Phi_{\xi_2}^{s_2, t_2}
=\Psi_{\xi_1^{2s_2-1}\xi_2}^{s_1(2s_2+t_2)+s_2t_1,\, t_1t_2},\vspace{1mm}\\
\Psi_{\xi_1}^{s_1, t_1}\Phi_{\xi_2}^{s_2, t_2}
=\Phi_{\xi_1^{2s_2-1}\xi_2}^{s_1(2s_2+t_2)+s_2t_1,\, t_1t_2},\quad
\Phi_{\xi_1}^{s_1, t_1}\Psi_{\xi_2}^{s_2, t_2}
=\Phi_{\xi_1^{2s_2+1}\xi_2}^{s_1(2s_2+t_2)+s_2t_1,\, t_1t_2}.
\end{array}\right.
\end{align}
\end{lemma}
\begin{proof}
It is a direct computation. Here we only calculate
$\Psi_{\xi_1}^{s_1, t_1}\Psi_{\xi_2}^{s_2, t_2}(x_{12})$ and
$\Phi_{\xi_1}^{s_1, t_1}\Phi_{\xi_2}^{s_2, t_2}(x_{12})$.
\begin{align*}
\Psi_{\xi_1}^{s_1, t_1}\Psi_{\xi_2}^{s_2, t_2}(x_{12})
&=\Psi_{\xi_1}^{s_1, t_1}\left(\xi_2 x_{12}^{2s_2}\chi_{12}^{t_2}\right)
=\xi_2\left(\xi_1 x_{12}^{2s_1}\chi_{12}^{t_1}\right)^{2s_2+1}
   \left(x_{12}^{2s_1}\chi_{21}^{t_1}x_{12}^{2s_1}\chi_{12}^{t_1}\right)^{\frac{t_2-1}2}\\
&=\xi_2\xi_1^{2s_2+1}x_{12}^{2s_1(2s_2+t_2)+2s_2t_1}\chi_{12}^{t_1t_2},\\
&=\Psi_{\xi_1^{2s_2+1}\xi_2}^{s_1(2s_2+t_2)+s_2t_1,\, t_1t_2}(x_{12}),\\
\Phi_{\xi_1}^{s_1, t_1}\Phi_{\xi_2}^{s_2, t_2}(x_{12})
&=\Phi_{\xi_1}^{s_1, t_1}\left(\xi_2 x_{12}^{2s_2}\chi_{21}^{t_2}\right)
=\xi_2 \left(\xi_1 x_{12}^{2s_1}\chi_{21}^{t_1}\right)^{2s_2-1}
   \left(x_{12}^{2s_1}\chi_{21}^{t_1}x_{12}^{2s_1}\chi_{12}^{t_1}\right)^{\frac{t_2+1}2}\\
&=\xi_2\xi_1^{2s_2-1}x_{12}^{2s_1(2s_2+t_2)+2s_2t_1}\chi_{12}^{t_1t_2}\\
&=\Psi_{\xi_1^{2s_2-1}\xi_2}^{s_1(2s_2+t_2)+s_2t_1,\, t_1t_2}(x_{12}).
\end{align*}
\end{proof}
\begin{lemma}
Let $\Gamma^{a_1, a_2}_{s_1}, \Gamma^{b_1, b_2}_{s_2}$ be two automorphisms of $A_{N2}^{\mu+}$
as defined in Theorem \ref{MainProof}, then
\begin{align}\label{AutRelationsN2_1}
\Gamma^{a_1, a_2}_{s_1}\Gamma^{b_1, b_2}_{s_2}
&=\left\{\begin{array}{ll}
     \Psi_{a_2^{2s_2+1}b_1}^{s_1(2s_2+1)+s_2,\, 1},
     &a_1^{2s_2}a_1b_2=1, (a_1b_2)^2=1,\vspace{1mm}\\
     \Phi_{a_2^{2s_2-1}b_1}^{s_1(2s_2+1)+s_2,\, 1},
     &a_1^{2s_2}a_1b_2=-1,  (a_1b_2)^2=1,\vspace{1mm}\\
     \Gamma_{s_1(2s_2+1)+s_2}^{-b_1a_1^{2s_2}a_1b_2,\, -(a_1a_2)^{2s_2+1}b_2}, &(a_1b_2)^2=-1.\\
     \end{array}\right.
\end{align}
\end{lemma}
\begin{proof}
It is a direct computation. Here we only calculate $\Gamma^{a_1, a_2}_{s_1}\Gamma^{b_1, b_2}_{s_2}(x_{12})$.
\begin{align*}
&\quad\Gamma^{a_1, a_2}_{s_1}\Gamma^{b_1, b_2}_{s_2}(x_{12})\\
&=\frac{b_1}2\Gamma^{a_1, a_2}_{s_1}
     \left(x_{11}^{2s_2}(x_{11}-x_{22})-x_{12}^{2s_2}(b_2x_{12}-b_2^{-1}x_{21})\right)\\
&=\frac{b_1}2\left\{\frac1{2^{2s_2}}
      \left(x_{11}^{2s_1}(x_{11}+x_{22})+x_{12}^{2s_1}(a_2x_{12}+a_2^{-1}x_{21})\right)^{2s_2}
      x_{12}^{2s_1}(a_2x_{12}+a_1^{-1}x_{21})\right.\\
&\quad-\frac{a_1^{2s_2}}{2^{2s_2}}
     \left(x_{11}^{2s_1}(x_{11}-x_{22})-x_{12}^{2s_1}(a_2x_{12}-a_2^{-1}x_{21})\right)^{2s_2}\\
&\quad \left.\frac{b_2a_1}2
    \left[x_{11}^{2s_1}(x_{11}-x_{22})(1-(b_2a_1)^{-2})
    +x_{12}^{2s_1}(a_2x_{12}-a_2^{-1}x_{21})(-1-(b_2a_1)^{-2})
    \right]\right\}\\
&=\left\{\begin{array}{ll}
     a_2^{2s_2}b_1a_2 x_{12}^{2s_1(2s_2+1)+2s_2}x_{12},
     &a_1^{2s_2}a_1b_2=1, (a_1b_2)^2=1,\vspace{1mm}\\
     a_2^{2s_2}b_1a_2^{-1} x_{12}^{2s_1(2s_2+1)+2s_2}x_{21},
     &a_1^{2s_2}a_1b_2=-1, (a_1b_2)^2=1,\vspace{1mm}\\
     \Gamma_{s_1(2s_2+1)+s_2}^{-b_1a_1^{2s_2}a_1b_2,\, -(a_1a_2)^{2s_2+1}b_2}(x_{12}), &(a_1b_2)^2=-1,\\
     \end{array}\right. \\
&=\left\{\begin{array}{ll}
     \Psi_{a_2^{2s_2+1}b_1}^{s_1(2s_2+1)+s_2,\, 1}(x_{12}),
     &a_1^{2s_2}a_1b_2=1, (a_1b_2)^2=1,\vspace{1mm}\\
     \Phi_{a_2^{2s_2-1}b_1}^{s_1(2s_2+1)+s_2,\, 1}(x_{12}),
     &a_1^{2s_2}a_1b_2=-1, (a_1b_2)^2=1,\vspace{1mm}\\
     \Gamma_{s_1(2s_2+1)+s_2}^{-b_1a_1^{2s_2}a_1b_2,\, -(a_1a_2)^{2s_2+1}b_2}(x_{12}), &(a_1b_2)^2=-1.\\
     \end{array}\right.
\end{align*}
\end{proof}

\begin{lemma}
Let $\Psi_{\xi}^{s_2, 1}, \Gamma^{\zeta_1, \zeta_2}_{s_1}$ be two automorphisms of $A_{N2}^{\mu+}$
as defined in Theorem \ref{MainProof}, then
\begin{align}\label{AutRelationsN2_2}
\Gamma^{\zeta_1, \zeta_2}_{s_1}\Psi_{\xi}^{s_2, 1}
=\Gamma^{\zeta_1^{2s_2+1}\xi,\, \zeta_2^{2s_2+1}}_{s_1(2s_2+1)+s_2},\quad
\Psi_{\xi}^{s_2, 1}\Gamma^{\zeta_1, \zeta_2}_{s_1}
=\Gamma^{\zeta_1,\, \xi^{2s_1+1} \zeta_2}_{s_1(2s_2+1)+s_2}.
\end{align}
\end{lemma}
\begin{proof}
It is a direct computation. Here we only calculate
$\Gamma^{\zeta_1, \zeta_2}_{s_1}\Psi_{\xi}^{s_2, 1}(x_{12})$ and
$\Psi_{\xi}^{s_2, 1}\Gamma^{\zeta_1, \zeta_2}_{s_1}(x_{12})$ as follows.
\begin{align*}
&\quad\Gamma^{\zeta_1, \zeta_2}_{s_1}\Psi_{\xi}^{s_2, 1}(x_{12})
=\Gamma^{\zeta_1, \zeta_2}_{s_1}(\xi x_{12}^{2s_2+1})\\
&=\frac{\zeta_1^{2s_2+1}\xi}{2^{2s_2+1}}
     \left[x_{11}^{2s_1}(x_{11}-x_{22})-x_{12}^{2s_1}(\zeta_2x_{12}-\zeta_2^{-1}x_{21})\right]^{2s_2+1}\\
&=\frac{\zeta_1^{2s_2+1}\xi}{2^{2s_2+1}}\left[x_{11}^{2s_1(2s_2+1)}(x_{11}-x_{22})^{2s_2+1}
     -x_{12}^{2s_1(2s_2+1)}(\zeta_2 x_{12}-\zeta_2^{-1}x_{21})^{2s_2+1}\right]\\
&=\frac{\zeta_1^{2s_2+1}\xi}2\left[x_{11}^{2s_1(2s_2+1)+2s_2}(x_{11}-x_{22})
     -\zeta_2^{2s_2}x_{12}^{2s_1(2s_2+1)+2s_2}(\zeta_2 x_{12}-\zeta_2^{-1}x_{21})\right]\\
&=\Gamma^{\zeta_1^{2s_2+1}\xi,\, \zeta_2^{2s_2+1}}_{s_1(2s_2+1)+s_2}(x_{12}),
\\
&\quad\Psi_{\xi}^{s_2, 1}\Gamma^{\zeta_1, \zeta_2}_{s_1}(x_{12})\\
&=\frac{\zeta_1}2\Psi_{\xi}^{s_2, 1}\left(x_{11}^{2s_1}(x_{11}-x_{22})
     -x_{12}^{2s_1}(\zeta_2x_{12}-\zeta_2^{-1}x_{21})\right)\\
&=\frac{\zeta_1}2\left[x_{11}^{2s_1(2s_2+1)+2s_2}(x_{11}-x_{22})
    -\xi^{2s_1}x_{12}^{2s_1(2s_2+1)+2s_2}(\zeta_2\xi x_{12}-(\zeta_2\xi)^{-1}x_{21})\right]\\
&=\Gamma^{\zeta_1,\, \xi^{2s_1+1} \zeta_2}_{s_1(2s_2+1)+s_2}(x_{12}).
\end{align*}
\end{proof}

\begin{lemma}
Let $\Phi_{\xi}^{s_2, 1}, \Gamma^{\zeta_1, \zeta_2}_{s_1}$ be two automorphisms of $A_{N2}^{\mu+}$
as defined in Theorem \ref{MainProof}, then
\begin{align}\label{AutRelationsN2_3}
\Phi_{\xi}^{s_2, 1}\Gamma^{\zeta_1, \zeta_2}_{s_1}
=\Gamma^{-\zeta_1,\,\xi^{2s_1-1}\zeta_2^{-1} }_{s_1(2s_2+1)+s_2}, \quad
\Gamma^{\zeta_1, \zeta_2}_{s_1}\Phi_{\xi}^{s_2, 1}
=\Gamma^{\xi \zeta_1^{-(2s_2+1)},\, -\zeta_2^{2s_2+1}}_ {s_1(2s_2+1)+s_2}.
\end{align}
\end{lemma}
\begin{proof}
It is a direct computation. Here we only calculate
$\Phi_{\xi}^{s_2, 1}\Gamma^{\zeta_1, \zeta_2}_{s_1}(x_{12})$ and
$\Gamma^{\zeta_1, \zeta_2}_{s_1}\Phi_{\xi}^{s_2, 1}(x_{12})$ as follows.
\begin{align*}
&\quad \Phi_{\xi}^{s_2, 1}\Gamma^{\zeta_1, \zeta_2}_{s_1}(x_{12})\\
&=\frac{\zeta_1}2\Phi_{\xi}^{s_2, 1}
     \left(x_{11}^{2s_1}(x_{11}-x_{22})-x_{12}^{2s_1}(\zeta_2 x_{12}-\zeta_2^{-1}x_{21})\right)\\
&=\frac{\zeta_1}2\left[(x_{11}^{2s_2}x_{22})^{2s_1}x_{11}^{2s_2}(x_{22}-x_{11})
     -(\xi x_{12}^{2s_2}x_{21})^{2s_1}x_{12}^{2s_2}(\zeta_2\xi x_{21}-\zeta_2^{-1}\xi^{-1}x_{12})\right]\\
&=\frac{\zeta_1}2\left[x_{11}^{2s_1(2s_2+1)+2s_2}(x_{22}-x_{11})
     -\xi^{2s_1}x_{12}^{2s_1(2s_2+1)+2s_2}
     (\zeta_2\xi x_{21}-\zeta_2^{-1}\xi^{-1}x_{12})\right]\\
&=\Gamma^{-\zeta_1,\,\xi^{2s_1-1}\zeta_2^{-1} }_{s_1(2s_2+1)+s_2}(x_{12}),\\
&\quad \Gamma^{\zeta_1, \zeta_2}_{s_1}\Phi_{\xi}^{s_2, 1}(x_{12})
=\Gamma^{\zeta_1, \zeta_2}_{s_1}(\xi x_{12}^{2s_2}x_{21})
=\xi \Gamma^{\zeta_1, \zeta_2}_{s_1}(x_{21}^{2s_2+1}) \\
&=\xi \left(\frac{\zeta_1^{-1}}2\right)^{2s_2+1}
   \left[x_{11}^{2s_1}(x_{11}-x_{22})+x_{12}^{2s_1}(\zeta_2x_{12}-\zeta_2^{-1}x_{21})\right]^{2s_2+1} \\
&=\xi \left(\frac{\zeta_1^{-1}}2\right)^{2s_2+1}
   \left[x_{11}^{2s_1(2s_2+1)}(x_{11}-x_{22})^{2s_2+1}
   +x_{12}^{2s_1(2s_2+1)}(\zeta_2x_{12}-\zeta_2^{-1}x_{21})^{2s_2+1}\right] \\
&=\frac{\xi \zeta_1^{-(2s_2+1)}}2
       \left[x_{11}^{2s_1(2s_2+1)+2s_2}(x_{11}-x_{22})
   +\zeta_2^{2s_2}x_{12}^{2s_1(2s_2+1)+2s_2}(\zeta_2x_{12}-\zeta_2^{-1}x_{21})\right] \\
&=\Gamma^{\xi \zeta_1^{-(2s_2+1)},\, -\zeta_2^{2s_2+1}}_ {s_1(2s_2+1)+s_2}(x_{12}).
\end{align*}
\end{proof}

\begin{theorem}
\begin{enumerate}
\item If $(n, \lambda)\neq (2, 1)$ or $(n, \lambda, \mu, N)=(2, 1, -1, \text{even})$, then
\[
{\rm Aut}_{\rm Hopf}\left(A_{Nn}^{\mu\lambda}\right)
=\left\{\Psi_{\xi}^{s,t},  \Phi_{\xi}^{s,t}\right\}
\]
with relations given by \eqref{AutRelationsMain},
where $s\in\overline{1, N}$, $t\in\overline{1, n-1}$,  $t\equiv 1({\rm mod }\,\, 2)$,
$(2s+t, N)=1=(n, t)$,
$\left\{\begin{array}{ll}
\xi^{2N}=1=\xi^4, &\text{$n$  even},\vspace{1mm}\\
\xi^2=1, &  \text{$n$ odd}.\\
\end{array}\right.$
\item If $(n, \lambda)=(2, 1)$ and $(\mu, N)\neq (-1, \text{even})$, then
\[
{\rm Aut}_{\rm Hopf}\left(A_{Nn}^{\mu\lambda}\right)
=\left\{\Psi_{\xi}^{s,1},  \Phi_{\xi}^{s,1}, \Gamma^{\zeta_1, \zeta_2}_s\right\}
\]
with relations given by \eqref{AutRelationsMain}, \eqref{AutRelationsN2_1},
\eqref{AutRelationsN2_2} and \eqref{AutRelationsN2_3},
where $s\in\overline{1, N}$, $(2s+1, N)=1$, $\xi^{2N}=1=\xi^4$, 
$\left\{\begin{array}{ll}
\zeta_1^4=\zeta_2^4=\mu=1, &\text{$N$ even,}\vspace{1mm}\\
\zeta_1^2=\zeta_2^2=\mu, &\text{$N$  odd. }\\
\end{array}\right.$
\end{enumerate}
\end{theorem}
\begin{proof}
It is a summary of Theorem \ref{MainProof} and Formulas \eqref{AutRelationsMain}, \eqref{AutRelationsN2_1},
\eqref{AutRelationsN2_2} and \eqref{AutRelationsN2_3}.
\end{proof}

\begin{example}
${\rm Aut}_{\rm Hopf}\left(A_{12}^{\mu-}\right)
=\left\{\Psi_{\pm \mu}^{1,1},  \Phi_{\pm \mu}^{1,1}\right\}
$ with
\begin{align*}
\Psi_{\mu}^{1,1}={\rm id}=\left(\Psi_{\pm \mu}^{1,1}\right)^2=\left(\Phi_{\pm \mu}^{1,1}\right)^2, \quad
\Psi_{-\mu}^{1,1}=\Phi_{-1}^{1,1}\Phi_{1}^{1,1}=\Phi_{1}^{1,1}\Phi_{-1}^{1,1}.
\end{align*}
So the automorphism group of $A_{12}^{\mu-}$ is isomorphic to the Klein four-group.
\end{example}

\begin{remark}
The Kac-Paljutkin algebra $H_8$ \cite{MR0208401} is isomorphic to $A_{12}^{\mu-}$ as Hopf algebras.
The automorphism group of $H_8$ was obtained by \cite{MR2879228} and was used to determine
isomorphic classes of Hopf algebras over $H_8$ \cite{Shi2019}.
\end{remark}

\begin{example}
${\rm Aut}_{\rm Hopf}\left(A_{12}^{\mu+}\right)
=\left\{\Psi_{\xi}^{1,1},  \Phi_{\xi}^{1,1},
\Gamma_1^{\zeta_1,\zeta_2}\,\,\big|\,\,\xi^2=\zeta_1^2=\zeta_2^2=1\right\}
$ with
\begin{align*}
\Gamma^{a_1, a_2}_{1}\Gamma^{b_1, b_2}_{1}
&=\left\{\begin{array}{ll}
     \Psi_{a_2b_1}^{4,\, 1}=\Psi_{\mu a_2b_1}^{1,\, 1}, &a_1b_2=1,\vspace{1mm}\\
     \Phi_{a_2b_1}^{4,\, 1}=\Phi_{\mu a_2b_1}^{1,\, 1}, &a_1b_2=-1,\\
     \end{array}\right. \\
\Gamma^{\zeta_1, \zeta_2}_{1}\Psi_{\xi}^{1, 1}
&=\Gamma^{\zeta_1\xi,\, \zeta_2}_{4}=\Gamma^{\zeta_1\xi,\, \mu\zeta_2}_1,\quad
\Psi_{\xi}^{1, 1}\Gamma^{\zeta_1, \zeta_2}_{1}
=\Gamma^{\zeta_1,\, \xi \zeta_2}_{4}=\Gamma^{\zeta_1,\, \mu\xi \zeta_2}_{1},\\
\Phi_{\xi}^{1, 1}\Gamma^{\zeta_1, \zeta_2}_{1}
&=\Gamma^{-\zeta_1,\,\xi\zeta_2}_{4}=\Gamma^{-\zeta_1,\,\mu\xi\zeta_2}_{1}, \quad
\Gamma^{\zeta_1, \zeta_2}_{1}\Phi_{\xi}^{1, 1}
=\Gamma^{\xi \zeta_1,\, -\zeta_2}_ {4}=\Gamma^{\xi \zeta_1,\, -\mu\zeta_2}_ {1}.
\end{align*}
So ${\rm Aut}_{\rm Hopf}\left(A_{12}^{\mu+}\right)
=\left\langle \Gamma_{1}^{1,-1}, \Gamma_1^{1,1}\,\big|\,
  \left(\Gamma_{1}^{1,-1}\right)^4={\rm id}=\left(\Gamma_1^{1,1}\right)^2,
  \Gamma_1^{1,1}\Gamma_{1}^{1,-1}\Gamma_1^{1,1}=\left(\Gamma_{1}^{1,-1}\right)^{-1}\right\rangle$.
That is to say, ${\rm Aut}_{\rm Hopf}\left(A_{12}^{\mu+}\right)$ is isomorphic
to the dihedral group $D_8$.
\end{example}

\begin{remark}
Let $H$ be a finite dimensional Hopf algebra and $H^*$ be the dual hopf algebra of $H$,
then ${\rm Aut}_{\rm Hopf}(H)\cong {\rm Aut}_{\rm Hopf}(H^*)$.
Let $D_{2n}=\langle a,b \mid a^n=b^2=1, bab=a^{-1}\rangle$ be the dihedral group of order $2n$, then
$A_{12}^{++}\cong (\k D_8)^*$ from \cite[Remark 3.4]{MR1800713}.
Let us check that ${\rm Aut}_{\rm Hopf}(A_{12}^{++})\cong {\rm Aut}_{\rm Hopf}(\k D_{8})$.
According to \cite{Rotmaler1977} and \cite[Page 124]{Li2020}, when $n\geq 3$ we have
\begin{align*}
{\rm Aut}(D_{2n})
&=\left\{\phi_{i,j}\mid \phi_{i,j}(a)=a^i, \phi_{i,j}(b)=ba^{j}, (i,n)=1, i, j\in\Bbb{Z}_n\right\}, \\
{\rm Aut}(D_{8})&=\left\{\phi_{i,j}\,\big|\, \phi_{i,j}(a)=a^i, \phi_{i,j}(b)=ba^{j}, i\in\{1,3\}, j\in\Bbb{Z}_4\right\}\\
&=\left\langle \phi_{1,1}, \phi_{3,1}\,\big|\, \phi_{1,1}^4={\rm id}=\phi_{3,1}^2,
     \phi_{3,1}\phi_{1,1}\phi_{3,1}=\phi_{1,1}^{-1}\right\rangle\\
&\cong D_8.
\end{align*}
So ${\rm Aut}_{\rm Hopf}\left(A_{12}^{\mu+}\right)\cong D_8\cong
{\rm Aut}(D_{8})\cong {\rm Aut}_{\rm Hopf}\left(\k D_8\right)$.
\end{remark}

\end{document}